\newtheorem*{theorem}{Theorem}
\newtheorem*{lemma}{Lemma}
\begin{document}

\title{Not all partial cubes are $\Theta$-graceful}

\date{}
%\date{\today}

\author{
 Nathann Cohen\\
 CNRS, LRI, Univ. Paris Sud\\
Orsay, France\\
  \texttt{nathann.cohen@gmail.com}
\and
   Matja\v{z} Kov\v{s}e\\
   School of Basic Sciences, 
IIT Bhubaneswar,\\
Bhubaneswar, 
India\\
  \texttt{matjaz.kovse@gmail.com }
}

\maketitle

\begin{abstract}
It is shown that the graph obtained by merging two vertices of two 4-cycles is not a $\Theta$-graceful partial cube, thus answering in the negative a question by Bre\v{s}ar and Klav\v{z}ar from \cite{brkl-06}, who asked whether every partial cube is $\Theta$-graceful.
\end{abstract}
\bigskip\noindent

\noindent
{\bf Keywords}: graceful labelings, trees, Ringel-Kotzig conjecture, 
	partial cubes

\bigskip\noindent
{\bf MR Subject Classifications: 05C78, 05C12}
\bigskip\noindent

A graph $G$ with $m$ edges is called {\em graceful} if there exists an injection $f:V(G) \rightarrow \{0,1,\ldots,m\}$ such that the edge labels, defined by $|f(x)-f(y)|$ for an edge $xy$, are pairwise distinct. The famous Ringel-Kotzig conjecture says that all trees are graceful, see a dynamic survey \cite{gal} for known classes of trees and other graphs which are graceful, and for the state of the art of graph labelings, an area started by the seminal paper of Rosa \cite{ro-67}.

The vertices of the {\em $d$-dimensional hypercube} are formed by all binary tuples of length $d$, two vertices being adjacent if the corresponding tuples differ in exactly one coordinate.  A subgraph $H$ of a graph $G$ is called {\em isometric} if the geodetic distance $d_H(u,v)$ in $H$ between any two vertices $u,v$ of $H$ is equal to their distance $d_G(u,v)$ in $G$. Isometric subgraphs of hypercubes are called {\em partial cubes} (cf. \cite{brkl-06, djok-73, klli-03, klmu-99, ov-11, wink-84}). The smallest dimension of a hypercube containing an isometric embedding of $G$ is called the {\em  isometric dimension} of $G$. A {\em median graph} is a graph in which every three vertices $u,v$ and $w$ have a unique median: a vertex $m$ that belongs to shortest paths between each pair of $u,v$ and $w$. Every tree is a median graph. Median graphs represent one of the most studied classes of partial cubes, see \cite{klmu-99}.

Two edges $e=xy$ and $f=uv$ of $G$ are in the Djokovi\'c-Winkler 
\cite{djok-73,wink-84} relation $\Theta$ if $$d(x,u) + d(y,v) \not= d(x,v) + d(y,u).$$
The relation $\Theta$ is reflexive and symmetric. Winkler~\cite{wink-84} proved that a connected bipartite graph is a partial cube if and only if $\Theta$ is a transitive relation. Therefore, $\Theta$ is an equivalence relation on a partial cube $G$ and so partitions the edge set of $G$ into the so-called $\Theta${\em-classes}. The number of $\Theta$-classes of a partial cube $G$ is equal to the isometric dimension of $G$.
An {\em isometric cover} $G_1,G_2$ of a connected graph $G$ consists of two isometric subgraphs $G_1$ and $G_2$ of $G$ such that $G=G_1\cup G_2$ and  $G_1 \cap G_2 \not=\emptyset$. Let $\widetilde{G}_1$ and $\widetilde{G}_2$ be isomorphic copies of $G_1$ and $G_2$, respectively. For any vertex $u \in G_i$, $ i \in \{1, 2\}$, let $\widetilde{u}_i$ be the corresponding vertex in $\widetilde{G}_i$. Then the  {\em expansion of $G$ with respect to} $G_1, G_2$ is the graph $\widetilde{G}$ obtained from the disjoint union of $\widetilde{G}_1$ and $\widetilde{G}_2$, where for any $u \in G_{1} \cap G_{2}$ the vertices $\widetilde{u}_1$ and $\widetilde{u}_2$ are joined by an edge. %A {\em contraction} is the reverse operation to the expansion.
Chepoi~\cite{chepoi1988} proved that a graph is a partial cube if and only if it can be obtained from $K_1$ by a sequence of expansions. Chepoi followed the approach of Mulder~\cite{mulder1978,mulder1980} who previously proved an analogous result for median graphs. 

Bre\v{s}ar and Klav\v{z}ar~\cite{brkl-06} introduced a new kind of labeling of partial cubes, called 
$\Theta$-graceful labeling. For a partial cube $G$, on $n$ vertices, a bijection $f:V(G) \rightarrow \{0,1,\ldots,n-1\}$, is called {\em $\Theta$-graceful labeling of} $G$ if all edges in each $\Theta$-class of $G$ receive the same label, and distinct $\Theta$-classes get distinct labels, where the labeling of the edges is defined by $|f(x)-f(y)|$ for every edge $xy$. When such a labeling exists $G$ is called a {\em $\Theta$-graceful partial cube}. Trees are partial cubes, with every $\Theta$-class of a tree consisting of a single edge. Therefore $\Theta$-graceful labelings coincide with graceful labelings on trees. It has been shown that hypercubes, even cycles, Fibonacci cubes and Lexicographic subcubes are $\Theta$-graceful. In \cite{brkl-06} a question has been proposed on whether every partial cube is $\Theta$-graceful. Note that a positive answer to the question would provide a positive solution of the Ringel-Kotzig conjecture.

A $\Theta$-graceful labeling $f$ is called a {\em consistent $\Theta$-graceful labeling} if for every two edges $xy$ and $uv$ of $G$ with $xy\Theta uv$ the following holds:
\begin{equation}
  \label{eqn:consistency}
  d(x,u) < d(x,v) \ \Rightarrow \ f(x) + f(v) = f(y) + f(u)
\end{equation}

%In \cite{brkl-06} only one example of a non consistent $\Theta$-graceful  labeling of a partial cube is presented, that is a $\Theta$-graceful labeling of a cycle on 16 vertices.

\begin{lemma}
\label{lemma1}
Let $G$ be a partial cube with a $\Theta$-graceful labeling $f$. For every isometric 4-cycle $C$ of $G$, equation~(\ref{eqn:consistency}) holds for $C$ labeled with $f$.
\end{lemma}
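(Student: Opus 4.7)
\medskip
\noindent\textbf{Proof plan.} Let the isometric 4-cycle $C$ have vertices $x,y,v,u$ in cyclic order, so that its edges are $xy, yv, vu, ux$. In a 4-cycle the opposite edges are in relation $\Theta$, so $xy\,\Theta\,uv$ and $xu\,\Theta\,yv$, and these are two \emph{different} $\Theta$-classes. Because $C$ is isometric, $d(x,u)=1$ and $d(x,v)=2$, so the hypothesis $d(x,u)<d(x,v)$ of equation~(\ref{eqn:consistency}) is automatically met; what must be proved is the conclusion $f(x)+f(v)=f(y)+f(u)$.

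The plan is to exploit the fact that a $\Theta$-graceful labeling assigns equal edge-labels within a class and distinct labels to distinct classes. Write $a$ for the common edge label of the class $\{xy,uv\}$ and $b$ for that of $\{ux,yv\}$, so
\[
 |f(x)-f(y)|=|f(u)-f(v)|=a,\qquad |f(x)-f(u)|=|f(y)-f(v)|=b,
\]
with $a\neq b$. The signed differences around the cycle satisfy the trivial identity
\[
 \bigl(f(x)-f(y)\bigr)-\bigl(f(u)-f(v)\bigr)=\bigl(f(x)-f(u)\bigr)-\bigl(f(y)-f(v)\bigr),
\]
obtained by cancelling $f(y),f(u)$ on each side. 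The left-hand side lies in $\{-2a,0,2a\}$ and the right-hand side in $\{-2b,0,2b\}$; since $a\neq b$, both sides must equal $0$. Thus $f(x)-f(y)=f(u)-f(v)$, which is exactly $f(x)+f(v)=f(y)+f(u)$.

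I do not expect any serious obstacle: once one notices that in a 4-cycle the two pairs of opposite edges form the only two $\Theta$-classes involved, the conclusion is forced by a one-line parity/sign argument using $a\neq b$. The only point to set up carefully is the cyclic labelling of $C$ so that the pair $(x,y,u,v)$ matches the hypothesis of~(\ref{eqn:consistency}), which is immediate from isometricity of the 4-cycle.
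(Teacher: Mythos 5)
Your proof is correct and rests on the same two facts as the paper's own argument: the signed differences around the 4-cycle telescope to zero, and the two $\Theta$-classes meeting the cycle carry distinct (and nonzero, by injectivity of $f$) edge labels. You package this as a direct derivation, whereas the paper runs a short case analysis by contradiction on which consistency condition fails, but the underlying idea is identical.
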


\begin{proof}
Let $\{v_1,\ldots, v_4\}$ denote the vertices of an isometric 4-cycle of $G$, $f$ a $\Theta$-graceful labeling of $G$, and $f_i = f(v_i)$, for $1\geq i \geq 4$.

Let $x = |f_1- f_2| = |f_4- f_3|$, $y = |f_2- f_3| = |f_4- f_1|$. A consistent $\Theta$-graceful labeling of $C$ must satisfy the following conditions:
\begin{description}
\item[(A)] $f_1- f_2 = f_4- f_3$,
\item[(B)] $f_2- f_3 = -(f_4- f_1)$.
\end{description}

Let $W(f)= (f_1-f_2, f_2-f_3, f_3-f_4, f_4-f_1, )$ denote the difference vector of edges of a 4-cycle.
For example one of the possible difference vectors of a consistent $\Theta$-graceful labeling $f$ might be: $(+x,-y,-x,+y)$. Note that the sum of the values of $W(f)$ is always zero.

Suppose on the contrary that at least one of the conditions (A) and (B) is not satisfied. \\

{\bf Case 1.} Exactly one of the conditions is not satisfied, W. l. o. g.  let it be condition (A). Therefore e. g. $W(f) = (+x, +y, +x, -y)$ for two non-zero relative integers $x,y\in\mathbb Z$. The sum of the values of $W(f)$ is consequently equal to $2x$ or to $-2x$, a non-zero value, which gives a contradiction.\\

{\bf Case 2.} Both conditions are not satisfied. Therefore e. g. 
$W(f) = (+x, +y, +x, +y)$. The sum of $W(f)$ being zero, we obtain the equality $x=-y$, which cannot happen in a $\Theta$-graceful labeling and so this is another contradiction.
\end{proof}

Moreover, the consistency condition (\ref{eqn:consistency}) in a $\Theta$-graceful labeling of an isometric 4-cycle with vertices $\{v_1,\dots,v_4\}$ implies: $f_1+f_3=f_2+f_4$, hence the sum of the two labels assigned to an antipodal pair of vertices is constant. Therefore the smallest and largest value of the labeling are assigned to an antipodal pair of vertices.

Let $G = (V, A)$ be an oriented graph with the set of vertices $V$ and the set of arcs $A$. An arc $(x, y) \in A$ is considered to be directed from $x$ to $y$, and $y$ is called the head and $x$ is called the tail of the arc. The {\em indegree} $deg^+(v)$ of a vertex $v \in G$ is the number of arcs with head $v$, and the {\em outdegree} $deg^-(v)$ of $v \in G$  is the number of arcs with tail $v$. A $\Theta$-graceful labeling of a partial cube $G$ defines an orientation on $G$: each arc is directed from larger vertex  to lower vertex label. Hence for a partial cube $G$ with $\Theta$-graceful labeling, the maximum label must be assigned to a vertex with $deg^+(v)=0$, while the minimum label must be assigned to a vertex with $deg^-(v)=0$. 

\noindent Let $G_8$ denote the graph obtained by merging two vertices of two 4-cycles, see Figure \ref{fig1}.
\vspace{5mm}

\begin{figure}[h!]
\begin{center}
\includegraphics[width=0.20\textwidth]{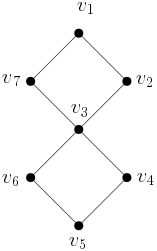}
\caption{Graph $G_8$.}
\label{fig1}
\end{center}
\end{figure}

Although the following result can be easily checked by computer, we provide complete proof as a general approach for other possible generalizations of graceful-like labelings of graphs.

\begin{theorem}
\label{thm.G8}
$G_8$ is not a $\Theta$-graceful partial cube.
\end{theorem}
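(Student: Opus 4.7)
The plan is to assume a $\Theta$-graceful labeling $f$ of $G_8$ exists and derive a contradiction by case analysis on the location of the labels $0$ and $n-1=6$. Label the central vertex (shared by the two 4-cycles) by $c$, the first 4-cycle by $c,a_1,a_2,a_3$, and the second by $c,b_1,b_2,b_3$; both 4-cycles are isometric in $G_8$. The four $\Theta$-classes are the opposite-edge pairs in each 4-cycle, so the four (distinct, positive) edge labels are
$$|f(c)-f(a_1)|,\ |f(c)-f(a_3)|,\ |f(c)-f(b_1)|,\ |f(c)-f(b_3)|.$$
Applying Lemma~\ref{lemma1} to both 4-cycles yields the antipodal-sum identities $f(c)+f(a_2)=f(a_1)+f(a_3)$ and $f(c)+f(b_2)=f(b_1)+f(b_3)$, and the remark following the lemma tells us that within each 4-cycle the smallest and largest of the four labels sit on an antipodal pair.

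If $f(c)=0$, the identities give $f(a_2)=f(a_1)+f(a_3)$ and $f(b_2)=f(b_1)+f(b_3)$, so the sum of the six non-$c$ labels equals $2(f(a_1)+f(a_3))+2(f(b_1)+f(b_3))$, which is even; but this sum is $1+2+\cdots+6=21$, a contradiction. The case $f(c)=6$ reduces to $f(c)=0$ by replacing $f$ with $6-f$. So assume both $0$ and $6$ are among the non-central labels. If they lie in the same 4-cycle, say cycle~1, then the antipodal max--min property forces $\{f(a_1),f(a_3)\}=\{0,6\}$, hence $f(c)+f(a_2)=6$; the cycle~1 edge labels become $f(c)$ and $6-f(c)$, and up to the symmetries $a_1\leftrightarrow a_3$ and $f\mapsto 6-f$ one may assume $f(c)\in\{1,2\}$. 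Plugging into the cycle~2 identity, together with the fact that the remaining three labels form $\{1,\ldots,5\}\setminus\{f(c),6-f(c)\}$, either makes $f(b_2)$ non-integral (when $f(c)=2$) or forces a cycle~2 edge label that coincides with a cycle~1 label (when $f(c)=1$), a contradiction.

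The remaining case is that $0$ and $6$ lie in different 4-cycles. Using the cycle-internal symmetries $a_1\leftrightarrow a_3$ and $b_1\leftrightarrow b_3$, the inter-cycle swap, and the complement $f\mapsto 6-f$, this reduces to three configurations: $(v_0,v_6)=(a_2,b_2),\ (a_2,b_1),\ (a_1,b_1)$. For each, I set $f(c)=k$ (with a further parameter where needed), use the two antipodal-sum identities to express every label in terms of these parameters, and impose that the five non-extremal labels are a permutation of $\{1,\ldots,5\}$. This yields in each configuration a small linear Diophantine equation (respectively $f(c)=3$ forced; $k+2m=9$; and $k+2p+2m=9$) with only a handful of integer solutions; for every solution, the four forced edge labels contain a repetition, contradicting the distinctness of the four $\Theta$-class labels. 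For example, the configuration $(a_2,b_2)$ forces $f(c)=3$, $\{f(a_1),f(a_3)\}=\{1,2\}$, $\{f(b_1),f(b_3)\}=\{4,5\}$, so the four edge labels become $\{1,2,1,2\}$. The main obstacle is taming the case analysis: without exploiting the symmetries of $G_8$ simultaneously with the consistency identities, the naive tree of $7\cdot 6=42$ placements of $(v_0,v_6)$ is unwieldy, but once the symmetries reduce the problem to a few parametrized sub-cases, each becomes an elementary check.
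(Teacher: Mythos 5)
Your proof is correct, and it reaches the contradiction by a genuinely different route than the paper. Both arguments rest on the same foundation --- the consistency lemma for isometric $4$-cycles and its consequence that each cycle's antipodal pairs have equal label sums, with the minimum and maximum sitting antipodally --- and both are exhaustive case analyses. But the organizing principles differ. The paper classifies by the position of the maximum label $6$ relative to the central vertex and then by the orientation that the labeling induces on $G_8$ (in particular the outdegree of the central vertex), yielding seven cases resolved by indegree/outdegree counts together with the antipodal-sum constants. You instead classify by the joint position of $0$ and $6$, and your key extra tool is the global constraint $\sum_v f(v)=21$: combined with the two antipodal-sum identities it gives a parity obstruction when $f(c)=0$ (a cleaner step than anything in the paper) and small linear Diophantine equations in the remaining configurations, while the automorphisms of $G_8$ together with the complement $f\mapsto 6-f$ cut the $42$ placements of $(0,6)$ down to about five parametrized sub-cases. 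I checked your three ``different cycles'' configurations and the same-cycle computations ($f(c)=1$ forces a repeated edge label $1$; $f(c)=2$ forces $2f(b_2)=7$), and they are all sound. The paper's approach generalizes via its orientation machinery to other graceful-type labelings, as the authors note; yours is more self-contained and arguably shorter, since the sum-of-labels constraint does the work that degree counting does in the paper.
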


\begin{proof}
Let $v_1,\ldots, v_{7}$ denote the vertices of $G_8$ as depicted in the Figure \ref{fig1}.  Suppose that $G_8$ admits a $\Theta$-graceful labeling $f$. Let $f_i = f(v_i)$, for $i  \in \{1, \ldots, 7\}$.  

Note that there are four different $\Theta$-classes: $\{v_1v_2, v_3v_7\}$, $\{v_1v_7,v_2v_3\}$, $\{v_3v_4, v_5v_{6}\}$ and $\{v_3v_6, v_4v_5\}$. Hence $|f_1- f_2| = |f_3 - f_7| = a$, $|f_1- f_7| = |f_2 -f_3| = b$, $|f_3- f_4 | = |f_5 - f_6| = c$ and $| f_3 - f_{6}|  = |f_4 - f_5| =d$. Let $\sigma^1= f_1 + f_3$ , $\sigma^2= f_3 + f_5$ - the constant sum of the pairs of labelings assigned to antipodal pairs in cycles. Note that $3\leq \sigma_1, \sigma_2 \leq 9$.

There are three distinct possibilities for choosing a vertex with the maximum label 6: $v_3$, one of the neighbours of $v_3$ - w. l. o. g. let this be $v_2$, one two non neighbours of $v_3$ - w. l. o. g. let this be $v_1$. Altogether inducing seven different orientations of $G_8$ as depicted on Figure \ref{fig2}.

\begin{figure}[h!]
\begin{center}
\includegraphics[width=1.00\textwidth]{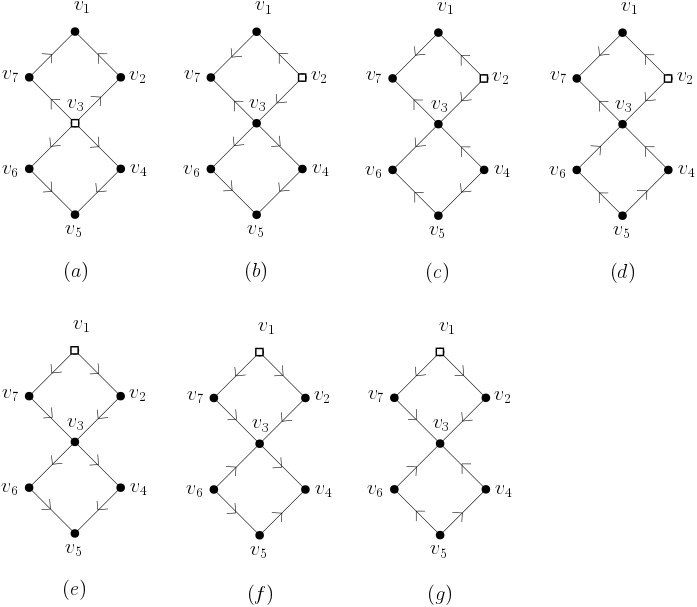}
\caption{Seven separate cases: (a) $f_3=6$, (b) $f_2=6$, $deg^+(v_3)=1$, (c) $f_2=6$, $deg^+(v_3)=2$, (d) $f_2=6$, $deg^+(v_3)=3$, (e) $f_1=6$, $deg^+(v_3)=2$, (f) $f_1=6$, $deg^+(v_3)=3$, (g) $f_1=6$, $deg^+(v_3)=4$,
and their corresponding orientations.}
\label{fig2}
\end{center}
\end{figure}

{\bf Case (a)} $f_3=6$.\\
Then either $f_1=0$ or $f_5=0$. W. l. o. g. let $f_1=0$. Hence $\sigma^1=6$ and we have two subcases.\\

{\bf Subcase (a.1)} $\{f_2, f_7\}= \{1,5\}$.\\
Hence $\{f_4, f_5, f_6\}= \{2,3,4\}$ and therefore $f_3 +f_5 \in \{8,9,10\}$. Hence $f_3 + f_5 > f_4 +f_6$, which is a contradiction.\\

{\bf Subcase (a.2)} $\{f_2, f_7\}= \{2,4\}$.\\
Hence $\{f_4, f_5, f_6\}= \{1,3,5\}$ and therefore $f_3 +f_5 \in \{7,9,11\}$. Hence $f_3 + f_5 < f_4 +f_6$ or $f_3 + f_5 > f_4 +f_6$, which is a contradiction.\\

%%%%%%%%%%%%%%%%%%%%%%%%%%%%%%%%%
{\bf Case (b)} $f_2=6$ and $deg^+(v_3)=1$.\\
%%%%%%%%%%%%%%%%%%%%%%%%%%%%%%%%%
Observing the orientation of $G_8$ as depicted in the case (b) in the Figure \ref{fig2}, it follows that $f_3 > f_4, f_5, f_6, f_7$ and moreover that $f_3 \in \{4,5\}$.\\

{\bf Subcase (b.1)} $f_3=4$.\\
Hence $\{f_4, f_5, f_6, f_7\}= \{0,1,2,3\}$. Therefore $f_1=5$ and $f_1+f_3=\sigma^1=9$. Which further implies $f_7=3$. Hence $v_5=0$ and 
$f_4,f_6 \in \{1,2\}$. Hence $f_3+f_5 > f_4 +f_6$, which is a contradiction.\\

{\bf Subcase (b.2)} $f_3=5$.\\

{\bf Subcase (b.2.1)} $f_5=0$.\\
Since $b=1$, it follows that $\{f_1,f_7\}=\{1,2\}$ or $\{f_1,f_7\}=\{3,4\}$. Since $\sigma_2=5$ it follows that $\{f_4,f_6\}=\{2,3\}$ or $\{f_4,f_6\}=\{1,4\}$, which is in contradiction with both possible choices for $f_1$ and $f_7$.\\

{\bf Subcase (b.2.2)} $f_7=0$.\\
Hence $\sigma_1=6$ and therefore $f_1=1$. Hence $f_5=1$ and $\{f_4, f_6\}= \{2,3\}$. Hence $f_3+f_5 > f_4 +f_6$, which is a contradiction.\\

%%%%%%%%%%%%%%%%%%%%%%%%%%%%%%%%%
{\bf Case (c)} $f_2=6$ and $deg^+(v_3)=2$.\\
%%%%%%%%%%%%%%%%%%%%%%%%%%%%%%%%%
Observing the orientation of $G_8$ as depicted in the case (c) in the Figure \ref{fig2}, it follows that there are only two vertices of outdegree 0, hence $f_7=0$ or $f_6=0$. Since $deg^+(v_3)=deg^+(v_6)=2$ and $deg^+(v_5)=1$ it follows that $f_3,f_5,f_6 \neq 5$, hence  $f_1=5$ or $f_4=5$.\\

{\bf Subcase (c.1)} $f_7=0$.\\
Since $deg^-(v_3)=2$ and $f_4 > f_5 > f_6$ it follows that $f_3, f_4, f_5 \neq 1$. Together with $\sigma_1=6$ and $f_3 \neq 5$ it follows that $f_6=1$. Moreover this implies that $f_1 \neq5$, hence $f_4=5$. Hence $\sigma_2=6$, which is in contradiction with $\sigma_1=6$.\\

{\bf Subcase (c.2)} $f_6=0$.\\
Since $f_1 > f_7$, $deg^-(v_3)=2$, and $deg^-(v_4)=2$, it follows that $f_1, f_3, f_4 \neq 1$. Hence $f_5=1$ or $f_7=1$.\\

{\bf Subcase (c.2.1)} $f_5=1$.\\
Hence $c=1$ and therefore $f_1 \neq 5$. Hence $f_4=5$. Therefore $\sigma_2=5$,  $f_3=4$. It also implies $b=2$, which is in contradiction with $\{f_1,f_7\}= \{2,3\}$ and $|f_1 - f_7|=1$.\\

{\bf Subcase (c.2.2)} $f_7=1$.\\
Hence $\sigma_1=7$. It follows that $\sigma_2 \neq 3,4$. Hence $\sigma_2=5$ and therefore $f_4=5$ and $\{f_3,f_5\}=\{2,3\}$. Therefore $f_1=4$. Hence $f_3=3$ and finally $f_5=2$. Therefore $a=c=2$ and $b=d=3$, which is a contradiction.\\

%%%%%%%%%%%%%%%%%%%%%%%%%%%%%%%%%
{\bf Case (d)} $f_2=6$ and $deg^+(v_3)=3$.\\
%%%%%%%%%%%%%%%%%%%%%%%%%%%%%%%%%
Observing the orientation of $G_8$ as depicted in the case (d) in the Figure \ref{fig2}, it follows that $v_7$ is the only vertex of outdegree 0, hence $f_7=0$ and $\sigma_1=6$. Since $f_5> f_4 > f_3 > v_7$, and $f_6 > f_3 > v_7$ it follows that $\{ f_1, f_3\}=\{1,5 \}$. Since $deg^+(v_3)=3$ it follows that $f_3 \neq 5$. Hence $f_1=5$, $f_3=1$, $a=1$ and $b=4$. Hence $\sigma_2=5$, $f_5=4$ and $\{f_4, f_6 \}=\{2,3\}$. Therefore $\{c,d\}=\{1,2\}$, which is in contradiction with $a=1$.\\

%%%%%%%%%%%%%%%%%%%%%%%%%%%%%%%%%
{\bf Case (e)} $f_1=6$ and $deg^+(v_3)=2$.\\
%%%%%%%%%%%%%%%%%%%%%%%%%%%%%%%%%
Observing the orientation of $G_8$ as depicted in the case (e) in the Figure \ref{fig2}, it follows that $f_3=3, f_5=0$ and $\{f_2,f_7\}= \{4,5\}$ and $\{f_4,f_6\}= \{1,2\}$. Moreover $\{a,b\}= \{1,2\}$ and $\{c,d\}= \{1,2\}$, which is a contradiction.\\

%%%%%%%%%%%%%%%%%%%%%%%%%%%%%%%%%
{\bf Case (f)} $f_1=6$ and $deg^+(v_3)=3$.\\
%%%%%%%%%%%%%%%%%%%%%%%%%%%%%%%%%
Observing the orientation of $G_8$ as depicted in the case (f) in the Figure \ref{fig2}, it follows that $v_4$ is the only vertex of outdegree 0, hence $f_4=0$. Moreover $f_2,f_6,f_7 \neq 1$.\\

{\bf Subcase (f.1)} $f_3=1$.\\
Hence $\sigma_1=7$ and $c=1$. Hence $f_2,f_7 \neq 5$ and therefore $f_6=5$ and $\sigma_2=5$. Hence $f_5=4$ and $d=4$. Therefore $\{f_2,f_7\}=\{2,3\}$. Hence  $\{a,b\}=\{3,4\}$, which is in contradiction with $d=4$.\\

{\bf Subcase (f.2)} $f_5=1$.\\
Hence $d=1$. Since $f_2, f_7 > f_3 > f_4$ and $f_6 > f_3, f_5$ it follows that $f_3=2$. Therefore $\sigma_1=8$, hence $\{f_2,f_7\}=\{3,5\}$. Moreover $\sigma_2=3$ and therefore $f_6=3$, which is in contradiction with $\{f_2,f_7\}=\{3,5\}$.\\

%%%%%%%%%%%%%%%%%%%%%%%%%%%%%%%%%
{\bf Case (g)} $f_1=6$ and $deg^+(v_3)=4$.\\
%%%%%%%%%%%%%%%%%%%%%%%%%%%%%%%%%
Observing the orientation of $G_8$ as depicted in the case (g) in the Figure \ref{fig2}, it follows that there is no vertex of outdegree 0, hence no vertex can be labeled with 0, a contradiction.
\end{proof}

Alternatively, one can check the claim from the previous theorem by computer with a short Python code (cf Fig~\ref{code}).

\begin{figure}[h!]
\begin{lstlisting}[language=Python]
from itertools import permutations
for f1,f2,f3,f4,f5,f6,f7 in permutations(range(7)):
  if (abs(f1-f7) == abs(f2-f3) and abs(f1-f2) == abs(f7-f3) and
      abs(f3-f6) == abs(f5-f4) and abs(f6-f5) == abs(f3-f4) and
      len({abs(f1-f7),abs(f1-f2),abs(f3-f4),abs(f4-f5)]}) == 4):
    print("One solution found")
\end{lstlisting}
\caption{Exploring all labelings of $G_8$}
\label{code}
\end{figure}

Let $S(G)$ denote the graph obtained from a graph $G$ by subdividing once each of its edges. In \cite{klli-03} it has been shown that $S(K_n)$ is a partial cube, where $K_n$ denotes the complete graph on $n$ vertices (see Fig.~\ref{fig3}). Let $Q_3^-$ denote graph obtained by deleting a vertex in 3-dimensional hypercube. Let $C(Q_3^-)$ denote graph obtained by expanding the four vertices inducing a claw in  $Q_3^-$ (see Fig.~\ref{fig3}).

\begin{figure}[htbp]
\begin{center}
\includegraphics[width=\textwidth]{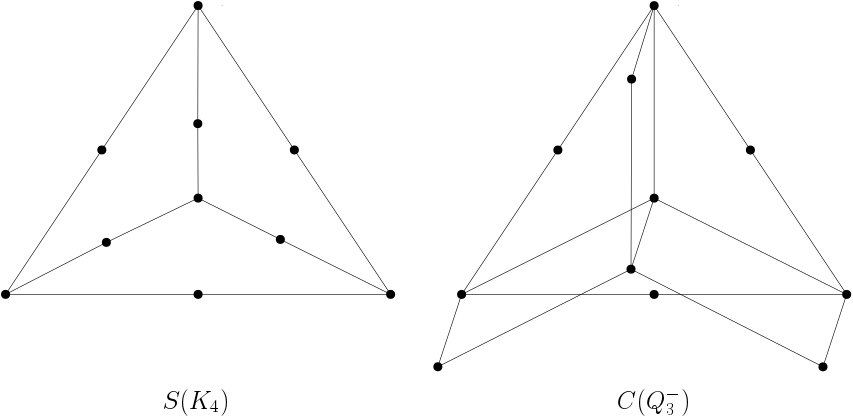}
\caption{Partial cubes $S(K_4)$  and $C(Q_3^-)$.}
\label{fig3}
\end{center}
\end{figure}

With the help of a computer we have checked that $G_8, S(K_4)$  and $C(Q_3^-)$ are the only partial cubes of isometric dimension 4 that are not $\Theta$-graceful. As it is easy to check (by hand) that all partial cubes of isometric dimension at most 3 are $\Theta$-graceful, hence $G_8$ is also the smallest example of a partial cube that is not $\Theta$-graceful.

As $G_8$ is a median graph the answer to the original question is negative also if reduced to the class of median graphs. It would be interesting to characterize which partial cubes are $\Theta$-graceful.


\begin{thebibliography}{99}

\bibitem{brkl-06}
B. Bre\v{s}ar and S. Klav\v{z}ar, $\Theta$-graceful labelings of partial cubes,
Discrete Math. 306 (2006) 1264--1271.

\bibitem{chepoi1988}
  V.~D.~Chepoi,
  $d$-Convexity and isometric subgraphs of Hamming graphs,
  Cybernetics, 1 (1988) 6--9.

\bibitem{djok-73}
D.~Djokovi\'c, Distance preserving subgraphs of hypercubes, J. Combin. 
Theory Ser. B 14 (1973) 263--267.

\bibitem{gal} J. A.~Gallian, A dynamic survey of graph labeling,
Electron. J. Combin., DS6, 535 pp (version December 15, 2019).
%http://www.combinatorics.org/ojs/index.php/eljc/article/view/DS6

\bibitem{klli-03}
S.~Klav\v{z}ar and A.~Lipovec, 
Partial cubes as subdivision graphs and as generalized Petersen graphs,
Discrete Math. 263 (2003) 157--165.

\bibitem{klmu-99}
S.~Klav\v{z}ar and H. M. Mulder, 
Median graphs: characterizations, location theory and related structures,
Journal of Combinatorial Mathematics and Combinatorial Computing 30 (1999) 103--128.

\bibitem{mulder1978}
 H.~M.~Mulder,
 The structure of median graphs,
 Discrete Math. 24 (1978) 197--204.

\bibitem{mulder1980}
 H.~M.~Mulder,
 {\em The Interval Function of a Graph},
 Math. Centre Tracts 132, Mathematisch Centrum, Amsterdam, 1980.

\bibitem{ov-11} 
S. Ovchinnikov, {\em Graphs and Cubes}, Springer, 2011.

\bibitem{ro-67}
A.~Rosa, On certain valuations of the vertices of a graph, 
{\em Theory of Graphs (Inter. Symposium, Rome, July 1967)}, 
Gordon and Breach, N. Y. and Dunod Paris (1967), 349--355. 

\bibitem{wink-84}
P.~Winkler, Isometric embeddings in products of complete graphs,
Discrete Appl. Math. 7 (1984), 221--225.

\end{thebibliography}
\end{document}